\newtheorem{theorem}{Theorem}[section]
\newtheorem{lemma}[theorem]{Lemma}
\newtheorem*{thmTT}{Theorem TT}
\newtheorem*{prop1}{Proposition 1}
\newtheorem*{thm1}{Theorem 1}
\newtheorem*{thm2}{Theorem 2}
\newtheorem{example}[theorem]{Example}
\theoremstyle{definition}
\newtheorem*{definition}{Definition}
\newtheorem{remark}[theorem]{Remark}
\newtheorem{problem}[theorem]{Problem}
\newtheorem*{problemTT}{Problem TT}
\newtheorem*{problemLebNull}{Problem}
\def\en{\mathbb N}
\def\er{\mathbb R}
\def\H{\mathcal H}
\def\A{\mathcal A}
\def\I{\mathscr I}
\newcommand{\graph}{\operatorname{graph}}
\newcommand{\lin}{\operatorname{Lin}}
\newcommand{\eps}{\varepsilon}
\def\eqn#1$$#2$${\begin{equation}\label#1#2\end{equation}}
\begin{document}

\author{Du\v{s}an Pokorn\'y, Martin Rmoutil}
\title{On Removable Sets For Convex Functions}
\thanks{D.~Pokorny was supported by a cooperation grant of the Czech and the German science foundation, GA\v CR project no.\ P201/10/J039, M.~Rmoutil by the Grant No. 710812 of the Grant Agency of the Charles University in Prague. }
\email{dpokorny@karlin.mff.cuni.cz, caj@rmail.cz}
\address{Charles University, Faculty of Mathematics and Physics, Sokolovsk\'a 83, 186 75 Praha 8 Karl\'{\i}n, Czech Republic}
\subjclass[2010]{26B25, 52A20}
\keywords{Convex function, locally convex function, intervally thin set, $c$-removable set, convex extension, separately convex function}
\begin{abstract}
In the present article we provide a sufficient condition for a closed set $F\in \er^d$ to have the following property which we call $c$-removability: Whenever a function $f:\er^d\to \er$ is locally convex on the complement of $F$, it is convex on the whole $\er^d$. We prove that no generalized rectangle of positive Lebesgue measure in $\er^2$ is $c$-removable. Our results also answer the following question asked in an article by Jacek Tabor and J\'ozef Tabor [{\it J. Math. Anal. Appl.} {\bf 365} (2010)]: Assume the closed set $F\subset \er^d$ is such that any locally convex function defined on $\er^d\setminus F$ has a unique convex extension on $\er^d$. Is $F$ necessarily intervally thin (a notion of smallness of sets defined by their ``essential transparency'' in every direction)? We prove the answer is negative by finding a counterexample in $\er^2$. 
\end{abstract}
\maketitle

\section{Introduction}
The present article is mostly motivated by the work \cite{Taborite} about negligible sets for convexity of functions in $\er^d$, where an interesting open problem was raised. We shall need the following notion introduced in \cite{Taborite}.

A set $A\subset \er^d$ is called \emph{intervally thin} if for any $x,y\in \er^d$ and any $\eps >0$ there exist $x'\in B(x,\eps)$ and $y'\in B(y,\eps)$ such that $[x',y']\cap A=\emptyset$.

\begin{problemTT}
Let $A\subset \er^n$ be closed. Suppose that for an arbitrary open set $U$ containing $A$ every locally convex function $f:U\setminus A\to\er$ has a unique extension on $U$. Is it then necessarily true that $A$ is intervally thin?
\end{problemTT}

Arguably our main result is that the answer to this question is negative. Example~\ref{HDS} and Remark~\ref{rem} provide a closed set $K$ which is not intervally thin, but which enjoys the ``unique extension property for convex functions'' (UEP) from Problem TT. We took the liberty of calling this set $K$ ``the Holey Devil's Staircase'' since it is the graph of the classical Cantor function (the Devil's Staircase) minus all the horizontal open line segments contained in the graph (in other words, it is the graph of the restriction of the Cantor function to the Cantor set).

One can readily verify that the Holey Devil's Staircase is not intervally thin. It is enough to consider the last intersection of the graph of the Cantor function with any line segment with endpoints in $(-\infty,0)\times\left(0,\frac12\right)$ and $(1,\infty)\times\left(\frac12,1\right)$; clearly, this intersection is an element of $K$.

To prove that $K$ has the UEP, is considerably more difficult and our effort in this direction has inspired a large part of this article. 

The main result of \cite{Taborite} is essentially the following theorem. Note that since we restrict our attention to convex functions (as opposed to $\omega$-semiconvex functions studied in \cite{Taborite}), we change the formulation of the theorem accordingly:

\begin{thmTT}
Let $U$ be an open subset of $\er^d$ and let $A$ be a closed intervally thin subset of $U$. Let $f:U\setminus A\to \er$ be a locally convex function. Then $f$ has a unique locally convex extension on $U$.
\end{thmTT}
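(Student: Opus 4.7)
For uniqueness, I would first observe that taking $x = y$ in the definition of intervally thin (with $x' = y'$) forces $A$ to have empty interior, so $U \setminus A$ is dense in $U$. Since locally convex functions are continuous, any two locally convex extensions of $f$ must coincide on the dense set $U \setminus A$, and hence on all of $U$.

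For existence, my plan is to extend by continuity, relying on the following key observation: \emph{if $V \subset U \setminus A$ is a convex open set, then $f|_V$ is convex}. This holds because for any $x, y \in V$ the function $t \mapsto f((1-t)x + ty)$ is a continuous, locally convex function on $[0,1]$, hence convex there. Thus $f$ is automatically convex on every segment contained in $U \setminus A$, and the intervally thin hypothesis guarantees an abundance of such segments close to any prescribed pair of points.

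The core difficulty—and what I expect to be the main obstacle—is showing that $f$ extends to a continuous function $\tilde f$ on $U$. I would fix $a \in A$, a ball $B = B(a, r) \subset U$, and a reference point $p_0 \in B \setminus A$ together with a small ball $B_0 = B(p_0, \rho) \subset B \setminus A$ on which $f$ is a priori bounded by some $M_0$ via standard convex-function estimates. Given any $x \in B \setminus A$ close to $a$, I would use the intervally thin property to obtain a segment $[x', p_0']$ avoiding $A$, with $x'$ near $x$ and $p_0'$ near $p_0$; combined with the short segment $[x, x'] \subset U \setminus A$ (which exists because $U \setminus A$ is open near $x$), this chains $x$ to $p_0'$ entirely inside $U \setminus A$. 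Prolonging $[x', p_0']$ slightly past $p_0'$ into $B_0$ and using one-dimensional convexity of $f$ on the extended segment yields a two-sided bound on $f(x')$ in terms of $M_0$ and the geometry; a further step of the same kind—pairing two intervally thin chains sharing a common far endpoint—gives a uniform Lipschitz estimate on $B(a, r/2) \setminus A$, allowing extension by uniform continuity. The delicate point is that a single convex inequality on a segment does not directly bound endpoint values; one has to extend segments \emph{beyond} a point where $f$ is already controlled, which is exactly why $p_0'$ must sit inside a whole clean ball $B_0 \subset U \setminus A$.

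Finally, to show $\tilde f$ is locally convex on $U$, I would fix any convex open ball $B \subset U$ and any $x, y \in B$. Using the intervally thin property I can find $x_n \to x$ and $y_n \to y$ in $U \setminus A$ with $[x_n, y_n] \cap A = \emptyset$, so $[x_n, y_n]$ is a compact segment in the open set $U \setminus A$ and, by the key observation, $f$ is convex on it. Writing the convexity inequality for each $n$ and passing to the limit via continuity of $\tilde f$ yields the convexity inequality for $\tilde f$ on $[x, y]$. Hence $\tilde f$ is convex on $B$, and in particular locally convex at every point of $U$.
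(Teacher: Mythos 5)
First, a point of reference: the paper does not actually prove Theorem~TT; it is quoted from Tabor--Tabor \cite{Taborite}, and the paper only records its two-step structure (first a unique continuous extension, then convexity of that extension). The closest in-house argument is Lemma~\ref{ext}, which establishes the continuous-extension step under a related hypothesis. Your overall architecture matches this scheme exactly, and two of your three parts are sound: uniqueness via density of $U\setminus A$ plus continuity of locally convex functions is correct, and the final step (approximating $[x,y]$ by clean segments $[x_n,y_n]$, using convexity of $f$ on each, and passing to the limit by continuity of $\tilde f$) is precisely the paper's step (2) and works.

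The genuine gap is in the continuity step, at the sentence claiming that prolonging $[x',p_0']$ past $p_0'$ into $B_0$ ``yields a two-sided bound on $f(x')$.'' It yields only a \emph{lower} bound. If the collinear order is $x',p_0',p_0''$ with $p_0',p_0''\in B_0$ and $|f|\le M_0$ on $B_0$, monotonicity of difference quotients gives
$$
\frac{f(p_0')-f(x')}{|p_0'-x'|}\;\le\;\frac{f(p_0'')-f(p_0')}{|p_0''-p_0'|}\;\le\;\frac{2M_0}{|p_0''-p_0'|},
$$
which bounds $f(x')$ from below; nothing prevents $f$ from being enormous at $x'$ and decreasing convexly down to $B_0$. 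Your later remark about pairing two chains with a common \emph{far} endpoint does not repair this, since both near endpoints are again only bounded from below, so no oscillation estimate follows. To control $|f(a)-f(b)|$ near a point of $A$ one needs clean, Lipschitz-controlled regions on \emph{both} sides of the point, and which side is used depends on the sign of the oscillation being excluded; this is exactly the content of the proof of Lemma~\ref{ext}, where the two cubes $C(y,\gamma)$ and $C(-y,\gamma)$ on opposite sides of $x$ and the four cases in \eqref{nerovnice} carry the argument (via a slope contradiction against the Lipschitz constant $K$ in the clean cube). Incorporating that two-sided mechanism --- or, equivalently, arranging for the point under scrutiny to be an \emph{interior} point of a clean segment whose two endpoints both lie in controlled balls --- is the missing idea; the rest of your plan is viable.
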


\noindent The proof of this theorem consists of two principal steps: 
\begin{enumerate}[(1)]
\item First, one proves that there is a unique continuous extension; this is the more difficult part.
\item Once one has the continuous extension, it is then easy to prove that it is convex. 
\end{enumerate}

Our aim is to apply this scheme to our set $K$. It turns out that in this case the easier step is (1); we only need a simple generalization of the corresponding theorem from \cite{Taborite}---which we have in Lemma \ref{ext}. 

Performing step (2) for $K$ is the crucial part and it motivates the introduction of $c$-removable sets with the consequent natural question: Which sets are $c$-removable?

\begin{definition}
We say that a closed set $A\subset \er^d$ is \emph{$c$-removable} if the following is true: Every real function $f$ on $\er^d$ is convex whenever it is continuous on $\er^d$ and locally convex on $\er^d\setminus A$.
\end{definition}

A consequence of Theorem TT is that all closed intervally thin sets are $c$-removable, but this fact does not help us. In $\er^2$ we were able to find a sufficient condition more general than interval thinness which covers also the case of our set~$K$:

\begin{prop1}\label{prop1}
Let $K\subset\er^2$ be compact and intervally thin in two different directions. 
Assume that for a dense set of line segments $L\subset \er^2$ the cardinality of
$K \cap L$ is at most countable. Then K is $c$-removable.
\end{prop1}
Here interval thinness of $K$ in a direction means that to any given line segment in that direction we can find arbitrarily close line segments contained in the complement of the set $K$. It is not difficult to see that for any closed set $K$ intervally thin in a direction $v$, any continuous function which is locally convex outside $K$ is necessarily convex on all lines parallel to $v$. Hence, the assumption of interval thinness of $K$ in two directions ensures that our function is convex in those two directions (i.e. is essentially separately convex) which we can use further in the proof---the key Lemma~\ref{sverak} tells us that a separately convex function cannot ``have a concave angle'' on any line.

The condition from the proposition may seem rather artificial, but it emerges quite naturally from our method of the proof. What is more, it is easily seen to be more general than interval thinness and is fulfilled by $K$. (Hence, the Holey Devil's Staircase is $c$-removable.) However, we were not able to generalize this condition to higher dimensions; instead, we used the geometric measure theory to obtain the following theorem which in $\er^2$ is strictly weaker than Proposition~1.

\begin{thm1}\label{theorem1}
Let $M\subset\er^d$ be a compact set which is intervally thin in $d$ linearly independent directions $n_1,\dots,n_d$.
Suppose that $M$ has $\sigma$-finite $(d-1)$-dimensional Hausdorff measure. Then $M$ is $c$-removable. 
\end{thm1}

This condition does not include interval thinness because there are intervally thin sets of positive $d$-dimensional measure in $\er^d$. For instance, in $\er^2$ one can construct such a set by taking the full unit square and digging in it countably many straight tunnels in such a way that the rest is intervally thin but still of positive measure.

Among other signs, also from this fact it seems rather obvious that this theorem is far from being a characterization of $c$-removable sets. In fact, it is not even clear whether all $c$-removable sets in $\er^2$ are totally disconnected; from the considerations contained in the second part of Section~5 it seems plausible that the Koch curve might be an example of a non-trivial $c$-removable continuum in $\er^2$. (Of course, such an example has to be rather complicated as it is not difficult to prove that no smooth curve in $\er^2$ is $c$-removable.)

On the other hand, we have the following.

\begin{thm2}\label{theorem2}
Let $A, B\subseteq\er$ be closed sets of positive Lebesgue measure. Then $A\times B$ is not $c$-removable.
\end{thm2}
This theorem is interesting only for $A$, $B$ totally disconnected (otherwise $A\times B$ contains a non-degenerated line segment and the statement is trivial). 
However, we do not know (and would like to know) whether e.g. the Cantor dust ($C\times C$ where $C$ is the Cantor set) is $c$-removable. As a matter of fact, possibly the most interesting of related open problems is:

\begin{problemLebNull}
Is there a closed totally disconnected Lebesgue null set in $\er^2$ which is not $c$-removable?
\end{problemLebNull}

It is worth pointing out that Theorem~2 is related to the recent work \cite{P} where a totally disconnected compact set which is not $c$-removable is constructed. The construction is rather complicated, but the witnessing function has a compact support, making the example stronger. However, even Theorem~2 is enough to achieve the main goal of \cite{P}, which is to disprove a theorem by L.~Pasqualini from 1938 \cite[Theorem~51]{Pasqualini} stating that any totally disconnected compact set in $\er^2$ is $c$-removable. It was the connection to this old article what convinced us that, of the two steps involved in the proof of Theorem~TT,  the crucial one is actually the second.

\section{Notation and basic facts}
All spaces shall be equipped with the Euclidean metric. We denote by $B(x,\eps)$ the open ball (with respect to the Euclidean metric) with the centre $x$ and radius $\eps$. Since confusion is unlikely, the symbol $(x,y)$ denotes an open interval in $\er$ as well as the point in $\er^2$ with coordinates $x$ and $y$. Similarly the symbol $[x,y]$ may denote a closed interval (when $x,y\in \er$) as well as the line segment with endpoints $x$ and $y$ (when $x,y\in \er^d$, $d>1$). By $\H^{k}$ we denote the $k$-dimensional Hausdorff measure. 
For $M\subset\er^d$ and $\alpha$ a countable ordinal we denote the $\alpha$-th Cantor-Bendixson derivative of $M$ by $M^{(\alpha)}$.
The unit sphere in $\er^d$ is denoted by $S^{d-1}$. 
For $v\in\er^d$ we denote the orthogonal complement of $v$ by $v^\bot$.
The symbol $\lin M$ denotes the linear span of $M\subset\er^d$.
For a fixed $d\in\en$ denote the standard basis of $\er^d$ by $\{e_1,\dots,e_d\}$.

Let $U\subset\er^d$ be open and $f:U\to\er$ be a function. We say that $f$ is \emph{locally convex} on $U$ if for some open convex $V\subset U$ the function $f|_V$ is convex. It is easy to see that a locally convex function is convex on any convex set contained in its domain.

The set $A\subset\er^d$ is called \emph{$k$-rectifiable} if there exist countably many Lipschitz mappings $f_i:\er^{k} \to \er^d$
such that 
$$
\H^{k}\bigg(A\setminus \bigcup_{i=0}^\infty f_i\left(\er^{k}\right)\!\bigg)=0.
$$
Since we will work only with the case $k=d-1$, we will call $(d-1)$-rectifiable sets just \emph{rectifiable}.

Let $G(d,k)$ be the Grassmannian of $k$-dimensional linear subspaces of $\er^d$ equipped with the unique invariant probability measure $\nu_k^d$.
Besides the Hausdorff measure we will also use the $k$-dimensional Favard measure (integralgeometric measure) $\I^k$ on $\er^d$ which is for a Borel set $M$ defined as
$$
\I^k(M)=\frac{1}{\beta(d,k)}\int_{G(d,k)} \int _{V} \H^0(M\cap p_V^{-1}(y))\;d\H^{k}(y)\;d\nu_k^d(V),
$$
where $p_V$ is the orthogonal projection to $V$ and the number $\beta(d,k)$ is a non-zero constant depending only on $d$ and $k$ whose precise value is not important for us.

We will also need the following properties of the Favard measures. 
Let $M\subset\er^d$ be a Borel set such that $\H^{d-1}(M)<\infty.$
Then $M$ can be expressed as a union of a rectifiable set $R$ and a set $P$ satisfying $\I^{d-1}(P)=0$ (c.f. \cite[3.3.13]{F}).
Moreover, each rectifiable set $R\subset\er^d$ satisfies $\I^{d-1}(R)=\H^{d-1}(R)$ (c.f. \cite[3.2.26]{F}).

\section{Separately convex functions}
The following lemma is a variant of an unpublished observation by V. \v Sver\'ak (see \cite{T}).
 For the convenience of the reader we provide a proof as we were not able to find one in the literature.
\begin{lemma}\label{sverak}
Let $f:\er^2\to\er$ be a separately convex function. Define $g:\er\to\er$ by $g(t)=f(t,t).$
Then 
$$
 \liminf_{t\to 0+} \frac{g(x+t)+g(x-t)-2g(x)}{t}\geq 0
$$ 
for every $x.$
\end{lemma}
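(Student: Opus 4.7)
The plan is to combine a coupling inequality, which bounds the second-difference of $g$ from below in terms of an auxiliary ``anti-diagonal'' second-difference of $f$, with a dyadic scaling estimate for the latter. Work at $x=0$ (otherwise translate) and introduce
$$L(t):=g(t)+g(-t)-2g(0),\qquad h(t):=f(t,-t)+f(-t,t),$$
$$A(t):=f(t,0)+f(-t,0)-2f(0,0),\qquad B(t):=f(0,t)+f(0,-t)-2f(0,0).$$
Since $s\mapsto f(s,0)$ and $r\mapsto f(0,r)$ are convex, the ratios $a(t):=A(t)/t$ and $b(t):=B(t)/t$ are classically nondecreasing in $t>0$ with nonnegative limits $\alpha,\beta$ as $t\to 0+$. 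Set also $\ell(t):=L(t)/t$ and $H(t):=(h(t)-h(0))/t$.

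Applying midpoint convexity of $s\mapsto f(s,\pm t)$ at $s=0$ and summing over the two choices of sign yields $L(t)+h(t)-h(0)\geq 2B(t)$; the symmetric argument in the other variable gives $L(t)+h(t)-h(0)\geq 2A(t)$. Dividing by $t$ produces the \emph{coupling inequality}
$$\ell(t)+H(t)\geq 2\max(a(t),b(t)).$$
For the \emph{scaling estimate}, bound each of $f(\pm t/2,\mp t/2)$ from above by a two-step midpoint decomposition (convexity first in one variable, then in the other) to get $f(\pm t/2,\mp t/2)\leq \tfrac14[f(0,0)+f(0,\mp t)+f(\pm t,0)+f(\pm t,\mp t)]$. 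Summing and rearranging yields $4(h(t/2)-h(0))\leq (h(t)-h(0))+A(t)+B(t)$, i.e.\
$$H(t/2)\leq \tfrac{1}{2}\bigl(H(t)+a(t)+b(t)\bigr).$$

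Iterate the scaling estimate: for fixed $T>0$ and small $s\in(0,T)$ choose $N=N(s)\in\en$ with $2^Ns\in[T,2T)$. Applying the scaling inequality $N$ times from $2^Ns$ down to $s$ gives
$$H(s)\leq \frac{H(2^Ns)}{2^N}+\sum_{k=1}^{N}\frac{a(2^ks)+b(2^ks)}{2^k}.$$
As $s\to 0+$ the prefactor vanishes (by local boundedness of $f$, hence of $H$, on $[T,2T]$), while dominated convergence and monotonicity of $a,b$ force the sum to tend to $\alpha+\beta$. Hence $\limsup_{s\to 0+}H(s)\leq \alpha+\beta$. Combining with the coupling and using $a(t)\geq\alpha$, $b(t)\geq\beta$,
$$\liminf_{t\to 0+}\ell(t)\geq 2\max(\alpha,\beta)-(\alpha+\beta)=|\alpha-\beta|\geq 0,$$
which is exactly the desired inequality.

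The main obstacle is identifying the right ``strength'' of the coupling. The naive four-corner midpoint inequality $4f(0,0)\leq f(t,t)+f(-t,-t)+f(t,-t)+f(-t,t)$ gives only $\ell+H\geq 0$, which is useless whenever $H$ stays of order one (for instance at the origin for $f(x,y)=|x-y|$, where $H\equiv 4$). The essential improvement is to keep $\max(a,b)$, rather than the average $(a+b)/2$, in the coupling: this is exactly enough to offset the $\alpha+\beta$ produced by the scaling iteration, leaving the nonnegative margin $|\alpha-\beta|$.
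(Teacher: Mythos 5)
Your argument is correct, but it runs along a genuinely different track from the paper's. Both proofs revolve around the anti-diagonal sum $h(t)=f(t,-t)+f(-t,t)$ (the paper's $\sigma$) and the midpoint inequalities supplied by separate convexity; indeed your coupling inequality $2B(t)\le L(t)+h(t)-h(0)$ is exactly the final display of the paper's proof. The difference is in how the anti-diagonal term is tamed. The paper argues by contradiction: assuming a sequence $t_n\searrow0$ with $L(t_n)\le -t_n$, it bootstraps a lower bound for $\sigma$ (``$\sigma(t_n)\ge pt_n$ for all $n$ implies $\sigma(t_n)\ge (p+2)t_n$ for all $n$''), each step consuming the contradiction hypothesis $\rho(t_n)\le -t_n$ and a limit $k\to\infty$ along the sequence, until $\sigma(t_1)$ is forced to exceed every real number. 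You instead prove, unconditionally, the downward self-improvement $H(t/2)\le\tfrac12\bigl(H(t)+a(t)+b(t)\bigr)$, iterate it from a fixed macroscopic scale to scale $s$, and obtain the true upper bound $\limsup_{s\to0+}H(s)\le\alpha+\beta$; keeping the coupling at strength $2\max(a,b)$ rather than $a+b$ then yields the quantitative conclusion $\liminf_{t\to0+}\ell(t)\ge|\alpha-\beta|\ge0$, which is slightly more than the lemma asserts (a lower bound in terms of the jumps of the one-sided derivatives of $f$ along the axes at $x$). The paper's bootstrap is shorter; your version is direct and quantitative. One small point worth a sentence in your write-up: the vanishing of $2^{-N}H(2^Ns)$ uses boundedness of $f$ on a compact piece of the anti-diagonal, which is not an axis direction, so you should invoke the standard fact that a separately convex function is bounded on compact squares (above by the four corner values, below by reflecting through midpoints and using that upper bound on a larger square). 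With that remark added, the proof is complete.
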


\begin{proof}
Without any loss of generality we can suppose that $x=(0,0)$, $f(z)=0$ and that there is a sequence $t_n\searrow0$ such that for each $n\in\en$,
\begin{equation}\label{direct}
 \frac{g(t_n)+g(-t_n)}{t_n}\leq -1.
\end{equation}
For $t>0$ put
$$
\sigma (t):=f(t,-t)+f(-t,t)
\quad\text{and}\quad
\rho (t):=f(t,t)+f(-t,-t).
$$
Note that, since $f$ is separately convex for every $t$,
\begin{equation}\label{zero}
\sigma(t)+\rho(t)\geq 0.
\end{equation}
Now we shall prove the following claim.

\emph{If for some $t_0$ and some $p$,
$$
\sigma(t_n)\geq pt_n \quad\text{for every $n$,}
$$
then
$$
\sigma(t_n)\geq (p+2)t_n \quad\text{for every $n$}.
$$
}
This is enough to prove the lemma since \eqref{zero} and \eqref{direct}, together with the above claim, imply $\sigma(t_n)\geq L$ for every $n$ and every $L\in\er$, which is not possible.

To prove the claim, first observe that due to \eqref{direct} we know that for each $n$,
$$
\rho(t_n)\leq -t_n.
$$
This implies for each $n$,
$$
\frac{\sigma(t_n)-\rho(t_n)}{2t_n}\geq \frac{p+1}{2}.
$$
By separate convexity of $f$, we get that
$$
f(t_k,t_n)+f(-t_k,-t_n)\geq \sigma(t_k)+ (t_n-t_k)\cdot\frac{\sigma(t_k)-\rho(t_k)}{2t_k} 
$$
provided $k>n.$
If we consider $k\to\infty$, we obtain
$$
f(0,t_n)+f(0,-t_n)\geq t_n\frac{p+1}{2}.
$$
Using the separate convexity of $f$ one more time together with \eqref{direct} we get
\begin{equation*}
\sigma(t_n)\geq f(0,t_n)+f(0,-t_n)+(f(0,t_n)+f(0,-t_n)-\rho(t_n))\geq (p+2)t_n.
\qedhere
\end{equation*}
\end{proof}


\begin{definition}
The set $A\subset \er^d$ is called \emph{intervally thin in direction $v\in S^{d-1}$} if for any $x,y\in \er^d$ with $x-y$ parallel to $v$, and any $\eps >0$, there exist $x'\in B(x,\eps)$ and $y'\in B(y,\eps)$ such that $[x',y']\cap A=\emptyset$.
\end{definition}


\begin{proof}[Proof of Theorem~1]
First we claim that for every non-convex function $f$, there is a line $L$ such that $L\cap M$ is countable and $f|_L$ is non-convex. Moreover, $L$ can be found such that $v\in\lin\{n_i: i\in J\}$
for no $J\subsetneq\{1,\dots,d\}$, where $v$ is the direction of $L.$

To prove the claim first express $M$ as a countable union of sets $M_n$ satisfying $\H^{d-1}(M_n)<\infty.$
By \cite[3.3.13]{F} we can express each $M_n$ in the form $P_n\cup R_n$ with $\I^{d-1}(P_n)=0$ and
$R_n$ rectifiable. 

Fix $n\in\en.$ Using \cite[3.2.26]{F} we see that 
$$
\I^{d-1}(R_n)=\H^{d-1}(R_n)<\infty.
$$ 
This means, by the definition of the Favard measure, that for almost every $H\in G(d,d-1)$, almost every line perpendicular to $H$ intersects $R_n$ in at most finitely many points.
In particular, this means that almost every line intersects $M$ in finitely many points.

So, putting $P=\bigcup P_n$ and $R=\bigcup R_n$, we have that $\I^{d-1}(P)=0$, and also that almost every line intersects $R$ in at most countably many points.
Hence, almost every line intersects $M=P\cup R$ in at most countably many points.

Since $f$ is non-convex, the set $\A$ consisting of all lines $L$ such that $f|_L$ is non-convex has a positive measure.
To finish the proof of the claim we simply pick a line from $\A$ such that $L\cap M$ is at most countable.

Fix $z\in L$. 
Due to the last part of the claim, we can suppose (possibly by composing $f$ with a suitable affine mapping) that $v_i=e_i$,
$v=\frac{1}{\sqrt d}(1,1,\dots,1)$  and such that $z=(0,0,\dots,0)$.

Now, we will prove the statement of the theorem using induction on $d$.
Suppose that $d=2$ and that $f$ is locally convex on $M^c$.
Put $K:=M\cap L$; then $K$ is a countable compact.
By Lemma~\ref{sverak} we know that if $f|_L$ is convex on $N^c$ for some $N$ then it is convex on a neighbourhood of any isolated point $N$.
This means that $f_L$ is convex on $K ^{(\alpha)}$ for every countable ordinal $\alpha.$
But, since $K$ is a countable compact, there is a countable ordinal $\beta$ such that $K^{(\beta)}=\emptyset$ which is a contradiction with the assumption of $f$ being non-convex.
This finishes the proof for $d=2.$

Suppose that the lemma is true for every $d$ up to $k-1\geq 2$, we will prove that it is true for $d=k$ as well.
Put $\nu:=v-v\cdot e_1$, $A(p):=pe_1+\lin\{e_2,\dots,e_d\}$ and $L(p):=pe_1+\lin\{\nu\}$ for $p\in\er.$
Then it is easy to verify that one of the following two statements is true:
\begin{enumerate}
\item[(a)] $f|_{L(p)}$ is non-convex for every $p$ from some interval $(a,b)$,
\item[(b)] $f|_{L(p)}$ is convex for every $p$.
\end{enumerate}
If $(a)$ is valid than by \cite[Theorem~7.7]{Mat} we know that $M\cap A(q)$ is of $\sigma$-finite $(d-2)$-dimensional Hausdorff measure for some $q\in(a,b)$.
But this is not possible by applying induction procedure to the function $f|_{A(q)}.$

One the other hand, $(b)$ is not possible either.
Indeed, we can apply Lemma~\ref{sverak} to $f|_{\lin\{v,e_1\}}$ the same way as in the proof of the case $d=2$.
\end{proof}

Note that the proof for $d=2$ directly gives us Proposition~\ref{prop1}.

\section{Extensions of locally convex functions}

\begin{definition}
We say that a set $A\subset\er^{d}$ is \emph{totally disconnected in a direction $v\in S^{d-1}$} if the set $A\cap l$ is totally disconnected for every line $l$ parallel to $v.$

\end{definition}

The following lemma is a refinement of \cite[Theorem 3.1]{Taborite} (note that the non-trivial part of the theorem is the existence of a unique continuous extension).

\begin{lemma}\label{ext}
Suppose that $A\subset\er^{d}$ is closed and both totally disconnected and intervally thin in some direction $v\in S^{d-1}$.
Let $U\subset\er^{d}$ be open.
Then every function locally convex on $A^{c}\cap U$ admits a continuous extension to $U.$
\end{lemma}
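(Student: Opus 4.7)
The plan is to construct the extension pointwise on $A\cap U$ by a one-dimensional convex interpolation along the line in direction $v$, and then verify continuity on all of $U$. The approach follows the scheme of \cite[Theorem~3.1]{Taborite}, streamlined so as to use only the single-direction hypothesis.

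Fix $x_0\in A\cap U$ and let $l_0$ be the line through $x_0$ in direction $v$. Because $A\cap l_0$ is closed and totally disconnected in $l_0$, it is nowhere dense there, so I can pick anchor points $a,b\in l_0\cap A^c\cap U$ with $x_0\in(a,b)$ and $[a,b]\subset U$. By interval thinness of $A$ in direction $v$, for each $\eta>0$ there is a segment $[a^\eta,b^\eta]$ parallel to $v$ with $|a^\eta-a|,|b^\eta-b|<\eta$ and $[a^\eta,b^\eta]\subset A^c$; for $\eta$ small also $[a^\eta,b^\eta]\subset U$. Since this segment is a convex subset of the open set $A^c\cap U$ on which $f$ is locally convex, $f|_{[a^\eta,b^\eta]}$ is convex. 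After an affine reparametrization by $[0,1]$ these give convex functions $g^\eta\colon[0,1]\to\er$ with $g^\eta(0)\to f(a)$ and $g^\eta(1)\to f(b)$ (continuity of $f$ on $A^c$), and the standard compactness for convex functions with controlled endpoint values makes $(g^\eta)$ locally uniformly bounded and equicontinuous on compact subintervals of $(0,1)$. Any subsequential limit $g$ is convex. Since $[a,b]\cap A^c$ is dense in $[a,b]$ and any such $g$ must agree there with the natural pullback of $f$, and since a convex function on a compact interval is determined by its values on any dense subset, the limit $g$ is unique. I define $\tilde f$ on $[a,b]$ via this $g$; in particular $\tilde f(x_0)$ is the value of $g$ at the preimage of $x_0$.

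A short argument then shows that $\tilde f(x_0)$ is independent of the choice of $a,b$ (two choices yield convex limits on overlapping intervals agreeing on a dense subset), and that on $A^c\cap U$ the construction returns $f$ itself. To check continuity of $\tilde f$ at $x_0\in A\cap U$, take $x_n\to x_0$ and run the same construction on the lines $l_n$ through $x_n$ in direction $v$: total disconnectedness along each $l_n$ provides anchors $a_n,b_n\in l_n\cap A^c\cap U$ converging to $a,b$, and interval thinness provides $v$-parallel segments in $A^c$ approximating $[a_n,b_n]$. The same equicontinuity argument yields uniform convergence of the associated convex functions $g_n$ to $g$, whence $\tilde f(x_n)\to\tilde f(x_0)$. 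At points of $A^c$ continuity is immediate from local convexity of $f$.

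The main obstacle is the uniform-convergence step: the approximating segments do not pass through $x_0$ (or through $x_n$), so one has to transfer control from nearby parallel convex functions to the values at the target points. This is exactly where the joint use of interval thinness and total disconnectedness in the same direction $v$ is essential: total disconnectedness ensures that $A^c$ is dense on every line parallel to $v$, providing anchors, while interval thinness supplies the $v$-parallel segments in $A^c$ whose convex structure pins down the extension.
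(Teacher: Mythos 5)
Your construction is sound but follows a genuinely different route from the paper's. The paper never builds the extension along lines: it proves directly that $f$ satisfies a uniform Cauchy (small-oscillation) condition on $A^c$ near every point of $U$, which yields the continuous extension abstractly by completeness. Concretely, total disconnectedness in direction $v$ supplies small cubes $C(\pm\alpha v,\gamma)\subset A^c$ on either side of the target point along $v$, on which $f$ is $K$-Lipschitz, plus an intermediate point $z=\lambda v\in A^c$ very close to the target; if $f$ oscillated by $\eps$ on $C(x,\delta)\cap A^c$, some chord from near $x$ to near $z$ would have slope greater than $K$, and interval thinness lets one realize an approximation of that chord as part of a segment lying entirely in $A^c$ and reaching back into the cube $C(-\alpha v,\gamma)$; monotonicity of difference quotients of the convex restriction of $f$ to that segment then propagates the slope $>K$ into the cube, contradicting the Lipschitz bound. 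This avoids all discussion of limits of one-dimensional convex functions, well-definedness, and stability under moving the line. Your approach instead defines the value at each point as the limit of convex restrictions to nearby $v$-parallel segments and then checks stability of that limit via Arzel\`a--Ascoli; it is more constructive and makes the convexity of the extension in direction $v$ visible, at the price of more bookkeeping.

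One point in your write-up needs repair: precompactness of a family of convex functions on $[0,1]$ does \emph{not} follow from controlled endpoint values alone --- $g_n(t)=-n\min(t,1-t)$ satisfies $g_n(0)=g_n(1)=0$ but is unbounded below at $t=\tfrac12$. You need, in addition, a uniform lower bound at one interior parameter; you get this for free from the convergence $g^\eta(t_p)\to f(p)$ at a point $p\in(a,b)\cap A^c$, which you already invoke to identify the limit (an upper bound at the endpoints together with a lower bound at one interior point gives uniform two-sided bounds, hence uniform local Lipschitz constants, for convex functions). With that inserted, the identification of all subsequential limits via the dense set $(a,b)\cap A^c$ and the uniform convergence $g_n\to g$ in the continuity check go through as you sketch them.
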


\begin{proof}
Let $f:U\to\er$ be locally convex on $A^{c}\cap U$.
Choose $x\in U$ and $\eps>0$ we need prove that there is a $\delta>0$ such that if $|x-a|,|x-b|\leq\delta$ and $a,b\in U\cap A^{c}$ then $|f(a)-f(b)|<\eps.$
Without any loss of generality we can suppose that $x=0$ and that $v$ is parallel to one of the coordinate axis.

For $u\in\er^{d}$ and $r>0$ put $l_u:=u+\lin\{v\}$ and $C(u,r):=u+[-r,r]^d$.
Since $A$ is totally disconnected in the direction $v$ we can find $\alpha>0$ and $\frac{\alpha}{2}>\gamma>0$ such that for $y:=\alpha v$ we have $C(y,\gamma),C(-y,\gamma)\subset A^{c}\cap U.$
Since $f$ is locally convex on $A^{c}$ and therefore locally Lipschitz on $A^{c}$, there is $K>0$ such that $f$ is K-Lipschitz on both $C(y,\gamma)$ and $C(-y,\gamma).$
Using the fact that $A$ is totally disconnected in the direction $v$ again, we can find $\min\left(\frac{\eps}{25K},\alpha-2\gamma\right)>\lambda>0$ such that for $z:=\lambda v$ we have $z\in A^{c}$.
Since $f$ is continuous on $A^{c}$, there is $\lambda>\delta>0$ such that for every $u\in C(z,\delta)$ we have $|f(z)-f(u)|\leq\frac{\eps}{4}$.

To obtain a contradiction, suppose that there are $a,b\in C(x,\delta)\cap A^{c}$ such that $|f(a)-f(b)|\geq\eps.$
Let $x_a$ and $x_b$ be the unique point in $(z+v^{\bot})\cap l_a$ and $(z+v^{\bot})\cap l_b,$ respectively.
Then $x_a,x_b\in C(z,\delta)$ and so $|f(z)-f(x_a)|\leq\frac{\eps}{4}$ and $|f(z)-f(x_b)|\leq\frac{\eps}{4}.$ 
Moreover, one of the inequalities 
$$
f(a)-f(z)\geq\frac{\eps}{2},\;\;f(a)-f(z)\leq-\frac{\eps}{2},\;\; f(b)-f(z)\geq\frac{\eps}{2},\;\;f(b)-f(z)\leq-\frac{\eps}{2}
$$ 
must hold.
Therefore, one of the inequalities 
\begin{equation}\label{nerovnice}
f(a)-f(x_a)\geq\frac{\eps}{4},\;\;f(a)-f(x_a)\leq-\frac{\eps}{4},\;\;f(b)-f(x_b)\geq\frac{\eps}{4},\;\;f(b)-f(x_b)\leq-\frac{\eps}{4}
\end{equation}
must hold as well.

Now, consider for instance the inequality $f(a)-f(x_a)\geq\frac{\eps}{4}$.
Since $A$ is intervally thin in the direction $v$ there are three colinear points $s_y\in C(-y,\gamma)$, $s_a\in C(x,\delta)$ and $s_x\in C(z,\delta)$ such that $[s_y,s_x]\subset U\setminus A$ and such that
$$
|f(s_a)-f(a)|, |f(s_x)-f(x_a)|\leq\frac{\eps}{16}.
$$
Then we have
\begin{equation}\label{eska}
|s_a-s_x|\leq |x-y|+2\delta=\lambda+2\delta\leq3\lambda\leq\frac{3\eps}{25K}.
\end{equation}
Moreover,
\begin{equation}\label{efka}
f(s_a)-f(s_x)\geq f(a)-f(x_a)-|f(s_x)-f(x_a)|-|f(s_a)-f(a)|\geq \frac{\eps}{4}-\frac{\eps}{16}-\frac{\eps}{16}=\frac{\eps}{8}.
\end{equation}
Using (\ref{eska}) and(\ref{efka}) we obtain
\begin{equation}\label{sklon}
\frac{f(s_a)-f(s_x)}{|s_a-s_x|}\geq \frac{\eps}{8}\cdot\frac{25K}{3\eps}>K.
\end{equation}
Choose an arbitrary $w\in([s_y,s_x]\setminus\{s_y\})\cap C(-y,\gamma)$.
From the convexity of $f$ on $[s_y,s_a]$, the fact that $[s_y,w]\subset[s_y,s_x]\subset[s_y,s_a]$ and \eqref{sklon} we obtain
$$
K<\frac{f(s_a)-f(s_x)}{|s_a-s_x|}\leq \frac{f(s_y)-f(w)}{|s_y-w|}\leq K,
$$
which is not possible. The remaining cases in \eqref{nerovnice} can be proved following the same lines.
\end{proof}

\begin{example}\label{HDS}
There is a compact set $K\subset\er^2$ which is not intervally thin and such that for every $f:K^{c}\to\er$ locally convex on $K^c$ there is a convex extension $F:\er^2\to\er.$ 
\end{example}
\begin{proof}
Let $h : [0, 1]\to [0, 1]$ be the classical Cantor function (the
Devil's Staircase) and let $C \subset[0,1]$ be the Cantor ternary set.
Now define the set $K$ as the graph of $h$ restricted to $C$.

First note that $\H^1(K)<\H^1(\graph h)<\infty$ and that $K$ is intervally thin in directions $(1,0)$ and $(0,1).$
Therefore using Theorem~\ref{theorem1} and Lemma~\ref{ext} we obtain that $K$ has the desired extension property and so it remains to prove that $K$ is not intervally thin.
Define $H:\er\to\er$ by $H=h$ on $[0,1]$, $H=0$ on $(-\infty,0)$ and $H=1$ on $(1,\infty).$ 
Then $\er^2\setminus\graph H$ has two components, say $C^+$ and $C^-$, and therefore for any $x^\pm\in C^\pm$ the line segment $[x^+,x^-]$ intersects $\graph H.$
Now take $x^+\in B((-\frac{1}{3},\frac{1}{3}),\frac{1}{4})$ and $x^-\in B((\frac{4}{3},\frac{2}{3}),\frac{1}{4})$ and set
$$
x:=\sup\{a\in\er: \text{there exists $b\in\er$ such that $(a,b)\in[x^+,x^-]\cap \graph H$}\}.
$$
Then $(x,H(x))\in K.$
\end{proof}

\begin{remark}\label{rem}
Note that the set $K$ from Example~\ref{HDS} also provides an answer to Problem~TT (see the introduction) in the negative. 
This can be seen from the fact that the argument used in the proof of Example~\ref{HDS} can be easily localized using the  self affinity of $K$.
\end{remark}

\section{Two examples}
In the first part of this section we prove Theorem~2 which provides us with a very natural class of examples of sets which are not $c$-removable. We shall need the following definitions.

Let $\beta,\eps>0$. Then we define the function $g_{\beta,\eps}:\er^2\to\er$ by
$$
g_{\beta,\eps}(x,y):=
\begin{cases}
\beta y^2-2\eps x-\eps^2,\ &(x,y)\in(-\infty,-\eps]\times\er,\\
\beta y^2+x^2,\ &(x,y)\in(-\eps,\eps)\times\er,\\
\beta y^2+2\eps x-\eps^2,\ &(x,y)\in[\eps,\infty)\times\er.
\end{cases}
$$
For $w\in\er$, set 
$$g_{\beta,\eps}^{w}(x,y):=g_{\beta,\eps}(x-w,y) \quad\text{and}\quad h_{\beta,\eps}^{w}(x,y):=g_{\beta,\eps}^{w}(y,x).$$
One can readily verify that all the functions just defined are convex and $C^1$.

The Hessian matrix of $g_{\beta,\eps}^{w}$ in $(-\infty,-\eps+w)\times\er$, $(-\eps+w,\eps+w)\times\er$ and $(\eps+w,\infty)\times\er$, respectively, is
\begin{equation}\label{hessvert}
\left(\begin{array}{cc}
	0 & 0\\
	0 & 2\beta
\end{array}\right),
\quad
\left(\begin{array}{cc}
	2 & 0\\
	0 & 2\beta
\end{array}\right)
\quad\text{and}\quad
\left(\begin{array}{cc}
	0 & 0\\
	0 & 2\beta
\end{array}\right),
\end{equation}
and similarly the Hessian matrix of $h_{\beta,\eps}^{w}$ in $\er\times(-\infty,-\eps+w)$, $\er\times(-\eps+w,\eps+w)$ and $\er\times(\eps+w,\infty)$, respectively, is

\begin{equation}\label{hesshor}
\left(\begin{array}{cc}
	2\beta & 0\\
	0 & 0
\end{array}\right),
\quad
\left(\begin{array}{cc}
	2\beta & 0\\
	0 & 2
\end{array}\right)
\quad\text{and}\quad
\left(\begin{array}{cc}
	2\beta & 0\\
	0 & 0
\end{array}\right).
\end{equation}

Further, define the functions $f_i:\er^2\to \er$, $i\in\{1,2,3,4\}$, as follows:
$$
f_1(x,y):=
\begin{cases}
\frac{1}{12}x^2+4(y-1)^2,\ &(x,y)\in\er\times (1,\infty),\\
\frac{1}{12}x^2,\ &(x,y)\in\er\times (-\infty, 1],
\end{cases}
$$
and $f_2(x,y):=f_1(x,-y)$, $f_3(x,y):=f_1(y,x)$ and $f_4(x,y):=f_3(-x,y)$.

Again, it is easy to check that the functions $f_i$, $i=1,2,3,4$ are $C^1$ and that the Hessian matrix of (e.g.) $f_1$ in $\er \times (1,\infty) $ and $\er \times(-\infty,1)$, respectively, is
\begin{equation}\label{hessvenku}
\left(\begin{array}{cc}
	\frac{1}{6} & 0\\
	0 					& 8
\end{array}\right)
\quad\text{and}\quad
\left(\begin{array}{cc}
	\frac{1}{6} & 0\\
	0 					& 0
\end{array}\right).
\end{equation}
It is also useful to note that the Hessian Matrix of $\varphi:(x,y)\mapsto -xy$ is
\begin{equation}\label{hessnekonv}
\left(\begin{array}{rr}
	 0 & -1\\
	-1 &  0
\end{array}\right).
\end{equation}

\begin{lemma}\label{LCRem}
Let there, for each $i\in\en$, be given $\eps_i>0$, $\beta_i \in \left(0, \frac{1}{80}\right)$ and $w_i\in(-1,1)$ such that $\sum\beta_i=\frac{1}{4}$, $\sum\eps_i<\frac{1}{24}$ and $(w_i-\eps_i,w_i+\eps_i)$ are pairwise disjoint intervals contained in $[-1,1]$. 
Denote $g_i:=g_{\beta_i,\eps_i}^{w_i}$ and $h_i:=h_{\beta_i,\eps_i}^{w_i}$, define the function $f:\er^2\to\er$ as
$$
f(x,y):=\varphi(x,y)+\sum_{i=1}^4 f_i(x,y)+ \sum_{i=1}^\infty\left(g_i(x,y) +h_i(x,y)\right),
$$
and define the set $K\subset \er$ as
$$
K:=[-1,1]\setminus \bigcup_{i=1}^\infty (w_i-\eps_i,w_i+\eps_i).
$$
Then $f$ has the following properties:
\begin{enumerate}
\item It is non-convex, since 
$$\frac{f(-1,-1)+f(1,1)}{2}<f(0,0);$$
\item it is locally convex on $\er^2 \setminus K^2$.
\end{enumerate}
\end{lemma}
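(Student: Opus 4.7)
For part (1), I would directly evaluate $f(-1,-1)$, $f(0,0)$, $f(1,1)$. The hypothesis $(w_i-\eps_i,w_i+\eps_i)\subset[-1,1]$ guarantees that the points $\pm 1$ fall in the outer (linear) pieces of each $g_i$ and $h_i$, yielding
\[
g_i(\pm1,\pm1)=\beta_i+2\eps_i(1\mp w_i)-\eps_i^2=h_i(\pm1,\pm1).
\]
Combined with $f_i(0,0)=0$, $f_i(\pm1,\pm1)=\tfrac{1}{12}$, $\varphi(\pm1,\pm1)=-1$, $\varphi(0,0)=0$, and $g_i(0,0)+h_i(0,0)\ge 0$, straightforward summation gives
\[
\tfrac{1}{2}\bigl(f(-1,-1)+f(1,1)\bigr)-f(0,0)\ \le\ -\tfrac{1}{6}+4\sum_i\eps_i-2\sum_i\eps_i^2,
\]
which is strictly negative by $\sum\eps_i<\tfrac{1}{24}$.

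For part (2), I would first verify that $f\in C^{1,1}(\er^2)$. Uniform convergence of $\sum\nabla g_i$ and $\sum\nabla h_i$ on bounded sets is immediate since $\|\nabla g_i\|_\infty,\|\nabla h_i\|_\infty$ are controlled by $\eps_i$ and $\beta_i$. The Lipschitz estimate for $\nabla f$ requires a small computation: the function $x\mapsto\partial_x\sum_i g_i(x,y)$ is Lipschitz with constant at most $2$, because by pairwise disjointness of the intervals $(w_i-\eps_i,w_i+\eps_i)$, the second derivative $\partial_{xx}g_i$ equals $2$ on at most one index $i$ at any given $x$ and is zero elsewhere. An analogous estimate applies to $\sum h_i$, while $\nabla f_i$ and $\nabla\varphi$ are plainly Lipschitz.

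With $f\in C^{1,1}$, local convexity at a point $(x_0,y_0)\notin K^2$ reduces to verifying that $\nabla^2 f\succeq 0$ almost everywhere on some open convex neighbourhood (by restricting to line segments and using absolute continuity of the derivative). By the symmetry $(x,y)\leftrightarrow(y,x)$ of the data---which swaps $g_i\leftrightarrow h_i$, $f_1\leftrightarrow f_3$, $f_2\leftrightarrow f_4$, and fixes $\varphi$---we may assume $x_0\notin K$. If $x_0\in I_j:=(w_j-\eps_j,w_j+\eps_j)$ for some $j$, choose $\delta>0$ with $[x_0-\delta,x_0+\delta]\subset I_j$ and set $V:=(x_0-\delta,x_0+\delta)\times(y_0-1,y_0+1)$. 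On $V$ at a smooth point, $g_j$ contributes $\operatorname{diag}(2,2\beta_j)$ to the Hessian by \eqref{hessvert}; the remaining $g_i$'s and all $h_i$'s aggregate (using $\sum\beta_i=\tfrac{1}{4}$) to at least $\operatorname{diag}(\tfrac{1}{2},\tfrac{1}{2})$; and $\sum f_i$ contributes at least $\operatorname{diag}(\tfrac{1}{3},\tfrac{1}{3})$ because $x\in(-1,1)$ forces $\partial_{xx}f_1=\partial_{xx}f_2=\tfrac{1}{6}$ and $\partial_{yy}f_3=\partial_{yy}f_4=\tfrac{1}{6}$. Together with the off-diagonal $-1$ from \eqref{hessnekonv}, the total Hessian dominates
\[
\begin{pmatrix}17/6 & -1\\ -1 & 5/6\end{pmatrix},\qquad\det=\tfrac{49}{36}>0,
\]
so it is positive semidefinite. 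The case $|x_0|>1$ is analogous and easier because $f_3$ or $f_4$ supplies an extra $\partial_{xx}=8$.

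The main difficulty lies in the bookkeeping for the infinite sums, both in establishing the global Lipschitz constant for $\nabla f$ and in collecting the Hessian contributions; the decisive ingredient in each case is the disjointness of the intervals $I_i$, which limits each $x$-value (resp. $y$-value) to lie in at most one such interval. Once this is handled, the PSD estimate and the non-convexity inequality are purely arithmetic, and the constants $\sum\beta_i=\tfrac{1}{4}$ and $\sum\eps_i<\tfrac{1}{24}$ are visibly calibrated to give strict margins in both (1) and (2).
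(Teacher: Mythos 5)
Your proposal is correct. Part (1) matches the paper's computation almost verbatim: the paper evaluates $g_i(0,0)$ exactly to get $\delta_i=\beta_i+2\eps_i(1-|w_i|)$, while you use the cruder bound $g_i(0,0)\ge 0$; both yield a strictly negative total thanks to $\sum\eps_i<\frac1{24}$. For part (2) you take a genuinely different route. The paper never touches the regularity of the infinite sum: it observes that every summand except $\varphi$ is convex, selects a \emph{finite} subfamily $F$ with $\alpha_F=\sum_{i\in F}\beta_i>\frac9{40}$ whose stripe boundaries avoid the given point, shows that $f_F:=\varphi+\sum_{i\in F}(g_i+h_i)+g_k$ is $C^2$ with positive definite Hessian (determinant at least $4(\alpha_F+\alpha_F^2)-1>0$) on a neighbourhood, and writes $f=f_F+(f-f_F)$ with the second term convex. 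You instead establish $f\in C^{1,1}$ and bound the a.e.\ Hessian of the full sum, invoking the a.e.-Hessian criterion for convexity of $C^{1,1}$ functions. This works: disjointness of the intervals gives the Lipschitz bound on $\nabla f$, the tail $\sum_{i\ne j}\partial_xg_i$ is locally constant on $I_j$ (since $\overline{I_i}\cap I_j=\emptyset$ for $i\ne j$, so the $xx$-entry is exact), and monotonicity of $y\mapsto\sum_i\partial_yh_i(x,y)$ gives the lower bound $0$ for its a.e.\ derivative. But it buys nothing over the paper's argument and costs the extra measure-theoretic justification that the a.e.\ Hessian of the series dominates the sum of the individual contributions --- precisely the ``bookkeeping'' you flag. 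Two small remarks: the ``remaining $g_i$'s and all $h_i$'s'' aggregate to at least $\operatorname{diag}\bigl(\frac12,\frac12-2\beta_j\bigr)$, not $\operatorname{diag}\bigl(\frac12,\frac12\bigr)$; the missing $2\beta_j$ is exactly what $g_j$ supplies, so your final matrix $\left(\begin{smallmatrix}17/6&-1\\-1&5/6\end{smallmatrix}\right)$ and its determinant $\frac{49}{36}$ are still correct. Also, unlike the paper, you use the $f_i$'s inside $[-1,1]^2$, which is why you can afford the full sum $\sum\beta_i=\frac14$ rather than a carefully chosen finite part of it; the paper's worst case uses only the $g_i$'s and $h_i$'s there.
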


\begin{proof}
First, we need to check that $f$ is a well-defined function. To that end, it is sufficient to note that for any $R>1$ and any $i\in\en$ the maximum of $g_i$ on $[-R,R]^2$ is attained at the point $(R,R)$ and is less than $\beta_i R^2+2\eps_i 2R$. Obviously, the same is true for $h_i$, and so the infinite series in the definition of $f$ converges locally uniformly on $\er^2$.

Now, the Hessian matrix of $f$ at a point $(x,y) \in \er^2\setminus K^2$ need not exist; however, all the summands in the definition of $f$, excluding $\varphi$, are convex functions. Since the sum of convex functions is convex, it is enough to prove that at any point $(x,y)$ outside $K^2$ we can find finitely many $f_i$'s, $g_i$'s and $h_i$'s such that their sum together with $-xy$ has a positively definite Hessian matrix at $(x,y)$.

This is easy to do if $(x,y)\notin[-1,1]^2$; in this case we only need to use one of the $f_i$'s. For example, if $(x,y)\in \er\times (1,\infty)$, then we see from \eqref{hessvenku} and \eqref{hessnekonv} that $\varphi+f_1$ is convex at $(x,y)$.

The ``worst case'' is that $(x,y)\in [-1,1]^2$ lies in a single vertical stripe of the form $(w_k-\eps_k,w_k+\eps_k)\times\er$ (fix the $k$) and not in any of the horizontal stripes of the form $\er\times (w_i-\eps_i, w_i+\eps_i)$. 
In this case we find a finite set $F\subset\en$ such that $(x,y)$ does not lie on the boundary of any stripe of the form $\er\times (w_i-\eps_i, w_i+\eps_i)$ with $i\in F$ (which can happen at most twice) and such that
$$
\alpha_F:=\sum_{i\in F} \beta_i > \frac{9}{40}\,.
$$
Let us now consider the function
$$
f_F:=\varphi+\sum_{i\in F}\left(g_i+h_i \right) +
\begin{cases}
g_k, \ & \text{if } k\notin F; \\
0,   \ & \text{if } k\in F.
\end{cases}
$$
Since we know that $(x,y)$ does not lie on the boundary of any of the stripes (horizontal or vertical) involved in the definition of $f_F$, the Hessian matrix of $f_F$ exists on a convex open neighbourhood $U$ of $(x,y)$ and it is easy to see from \eqref{hessvert} and \eqref{hesshor} that its determinant satisfies
$$
\det\left(H_{f_F}(a,b)\right)\geq 4\left(\alpha_F+\alpha_F^2\right)-1>4\left(\frac{9}{40}+\frac{81}{1600}\right)-1>0, \quad (a,b)\in U.
$$
As $\frac{\partial^2 f_F}{\partial x^2}>0$, we obtain that $f_F$ is convex on $U$. 
On the other hand, $f-f_F$ is convex and therefore $f$ is convex on $U$; one can check the other cases in a similar way concluding the proof of property $(b)$. 

It remains to verify property $(a).$
Denote
$$
\delta_i:=\frac{g_i(-1,-1)+g_i(1,1)}{2}-g_i(0,0) .
$$
An easy computation shows that
$$
\delta_i=\beta_i+2\eps_i(1-|w_i|)
$$
and clearly the same is also true if we substitute all the occurrences of $g$ in the definition of $\delta_i$ by $h$. Hence, 
$$
\sum_{i=1}^\infty \delta_i=\sum_{i=1}^\infty \beta_i+2\sum_{i=1}^\infty(\eps_i(1-|w_i|))<\frac{1}{4}+2\sum_{i=1}^\infty\eps_i<\frac{1}{3}\,.
$$
We also have
$$
\sum_{i=1}^4 \left(\frac{f_i(-1,-1)+f_i(1,1)}{2}-f_i(0,0)\right)=4\cdot\frac{1}{12}=\frac{1}{3}\,.
$$
The last two facts clearly imply $(a)$.
\end{proof}

\begin{proof}[Proof of Theorem~2]
If $A\times B$ contains a line segment then it is not $c$-removable as follows from \cite[Example 2.1]{Taborite}. From now on, assume that the sets $A$ and $B$ are totally disconnected.

By the Lebesgue density theorem we can find points $a\in \er$ and $b\in \er$ which are density points of $A$ and $B$ respectively. Without loss of generality assume that $a=b=0$; since $A$ and $B$ are both closed, $0\in A\cap B$. Then $0$ is clearly a point of density of $A\cap B\cap (-A)\cap(-B)$; consequently we can further assume that the sets $A$ and $B$ are symmetrical.  We shall prove that $(A\cap B)^2$ is not $c$-removable.

Take an $r>0$ such that $\frac{1}{2r}\lambda\left(A\cap B \cap (-r,r) \right) > \frac{23}{24}$ and such that $r$ (and therefore also $-r$) is in $A\cap B$. Without loss of generality we can assume that $r=1$. Now, for $i \in \en$ take $w_i\in(-1,1)$ and $\eps_i>0$ such that the intervals $(w_i-\eps_i,w_i+\eps_i)$ are pairwise disjoint and such that 
$$[0,1]\setminus A\cap B= \bigcup_{i=1}^\infty(w_i-\eps_i, w_i+\eps_i).$$
The assumptions of Lemma \ref{LCRem} are now satisfied for any choice of positive numbers $\beta_i$, $i\in \en$, such that $\sum \beta_i = \frac{1}{4}$.
\end{proof}

The following two lemmas are concerned with the Koch curve and constitute a partial result regarding its $c$-removability. See also Problem~\ref{KochProblem}. 

\begin{lemma}\label{limit}
$$
\lim_{k\to\infty}\left(3^k\prod\limits_{j=0}^{k-1}\frac{3^{j+1}+3}{3^{j+1}+1}
-2\sum\limits_{m=0}^{k-1}3^m\prod\limits_{j=0}^{m-1}\frac{3^{j+1}+3}{3^{j+1}+1}\right)=\infty\,.
$$
\end{lemma}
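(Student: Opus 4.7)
The plan is to denote $a_k := 3^k\prod_{j=0}^{k-1}\frac{3^{j+1}+3}{3^{j+1}+1}$ (with $a_0=1$) and write the expression inside the limit as
$$
b_k := a_k - 2\sum_{m=0}^{k-1} a_m.
$$
The key observation is that $b_k$ satisfies a telescoping recurrence. Indeed, adding one term to $b$ gives
$$
b_{k+1}-b_k = (a_{k+1}-a_k)-2a_k = a_{k+1}-3a_k,
$$
and from the multiplicative relation $a_{k+1}/a_k = 3\cdot\frac{3^{k+1}+3}{3^{k+1}+1}$ one computes
$$
a_{k+1}-3a_k = 3a_k\cdot\frac{2}{3^{k+1}+1} = \frac{6\,a_k}{3^{k+1}+1}.
$$

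Thus $b_k$ is monotone increasing, and telescoping yields
$$
b_k = 1 + \sum_{m=0}^{k-1}\frac{6\,a_m}{3^{m+1}+1}.
$$
So it suffices to show the series on the right diverges. This will be the only real step.

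For this, I would observe that each factor $\frac{3^{j+1}+3}{3^{j+1}+1}=1+\frac{2}{3^{j+1}+1}$ is at least $1$, so $\prod_{j=0}^{m-1}\frac{3^{j+1}+3}{3^{j+1}+1}\geq 1$ and hence $a_m\geq 3^m$ for every $m$. Therefore
$$
\frac{6\,a_m}{3^{m+1}+1} \;\geq\; \frac{6\cdot 3^m}{3^{m+1}+1} \;=\; \frac{2}{1+3^{-(m+1)}} \;\xrightarrow[m\to\infty]{}\; 2,
$$
so the general term of the series stays bounded below by a positive constant for all large $m$, whence the series diverges and $b_k\to\infty$.

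The main (and essentially only) obstacle is spotting the recurrence $b_{k+1}-b_k=a_{k+1}-3a_k$; once one notices that the coefficient $2$ in front of the sum is precisely what makes the difference $a_{k+1}-3a_k$ appear (rather than some expression whose sign could change), the rest is a routine estimate using the trivial bound $a_m\geq 3^m$, which survives because the infinite product $\prod_{j\geq 0}\frac{3^{j+1}+3}{3^{j+1}+1}$ converges to a finite positive number (the factor $3^{m+1}$ in the denominator is cancelled by $a_m\sim 3^m\cdot\text{const}$).
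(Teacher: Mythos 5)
Your proof is correct. It differs from the paper's in one structural respect: the paper first establishes the closed-form identity $\prod_{j=0}^{k-1}\frac{3^{j+1}+3}{3^{j+1}+1}=\frac{2\cdot 3^k}{3^k+1}$ by induction, substitutes it everywhere, and then manipulates the resulting explicit expression $2\bigl(\frac{3^{2k}}{3^k+1}-2\sum_{m=0}^{k-1}\frac{3^{2m}}{3^m+1}\bigr)$ algebraically to get the lower bound $2k$. You instead avoid computing the product at all: the telescoping identity $b_{k+1}-b_k=a_{k+1}-3a_k=\frac{6a_m}{3^{m+1}+1}$ combined with the trivial bound $a_m\geq 3^m$ shows the increments are bounded below by a constant (in fact by $3/2$), giving $b_k\geq 1+\frac{3}{2}k$. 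Both arguments ultimately show linear growth of the sequence; yours is arguably more robust since it does not depend on the product telescoping to a closed form, while the paper's is more explicit about the asymptotics (one sees $a_k\sim 2\cdot 3^k$ directly). Every step of your computation checks out, including $a_{k+1}/a_k=3\cdot\frac{3^{k+1}+3}{3^{k+1}+1}$ and the resulting value of $b_0=1$.
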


\begin{proof}
First, consider the following formula which follows easily by induction
$$
\prod\limits_{j=0}^{k-1}\frac{3^{j+1}+3}{3^{j+1}+1}=\frac{2\cdot 3^k}{3^k+1}\,.
$$
Now 
$$
\begin{aligned}
2\left(\frac{3^{2k}}{3^k+1}-2\sum\limits_{m=0}^{k-1}\frac{3^{2m}}{3^m+1}\right)
=&2\left(3^k-\frac{3^{k}}{3^k+1}-2\sum\limits_{m=0}^{k-1}\left(3^m-\frac{3^{m}}{3^m+1}\right)\right)\\
=&2\left(3^k-2\sum\limits_{m=0}^{k-1}3^m-\frac{3^{k}}{3^k+1}+2\sum\limits_{m=0}^{k-1}\frac{3^{m}}{3^m+1}\right)\\
\geq&2\left(3^k-2\frac{3^k-1}{3-1}-1+2\sum\limits_{m=0}^{k-1}\frac{3^{m}}{3^m+1}\right)\\
=&2\left(1-1+2\sum\limits_{m=0}^{k-1}\frac{3^{m}}{3^m+1}\right)\geq 4\sum\limits_{m=0}^{k-1}\frac{1}{2}=2k.
\end{aligned}
$$
\end{proof}

\begin{lemma}
Suppose that $f$ is a continuous function locally convex on the complement of the Koch curve.
Then $f$ is convex on every line parallel to the $y$-axis.
\end{lemma}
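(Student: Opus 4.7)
The plan is to argue by contradiction, supposing that $f|_L$ fails to be convex on some vertical line $L=\{x_0\}\times\er$, and to use the self-similar structure of the Koch curve $K$ together with Lemma~\ref{limit} to derive a contradiction via iterative amplification of a chord deficit.

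First I would reduce to a localised chord inequality. Since $K$ is nowhere dense in $\er^2$, every connected component of $L\setminus K$ is an open sub-interval of $L$ contained in $\er^2\setminus K$; by local convexity of $f$ on $\er^2\setminus K$, the restriction $f|_L$ is convex on each such component. Consequently, a failure of convexity of $f|_L$ can be captured by a triple $y^{-}<y^{*}<y^{+}$ with $(x_0,y^{*})\in K$ and a strict chord deficit
$$
\Delta \;:=\; f(x_0,y^{*})\;-\;\frac{(y^{+}-y^{*})f(x_0,y^{-})+(y^{*}-y^{-})f(x_0,y^{+})}{y^{+}-y^{-}}\;>\;0.
$$
By continuity of $f$, this deficit persists (with $\Delta$ replaced by $\Delta/2$, say) on a small vertical rectangle centred at $(x_0,y^{*})$.

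Next I would exploit the self-similarity of $K$: near the witnessing point $(x_0,y^{*})$, and at each scale $3^{-k}$, the set $K$ agrees with an affine copy of an initial piece of $K$; the open ``gap regions'' in $\er^2\setminus K$ at that scale contain many line segments on which $f$ is convex. The key geometric input, and the main obstacle I anticipate, is a one-step amplification of the form
$$
\Delta_k \;\ge\; \frac{3^{k}+3}{3^{k}+1}\,\Delta_{k-1} \;-\; 2\cdot 3^{k-1}C,
$$
where $\Delta_k$ is an appropriately rescaled chord deficit at scale $3^{-k}$ and $C$ is a constant controlled by the oscillation of $f$ on a fixed compact neighbourhood of $L$. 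I expect the factor $\frac{3^{k}+3}{3^{k}+1}$ to arise as the ratio of lengths between a Koch bump at scale $3^{-k}$ and its straight baseline, combined with convexity of $f$ along the gap segments immediately below each bump.

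Iterating this recurrence $k$ times yields
$$
\Delta_k \;\ge\; \Delta\cdot 3^{k}\prod_{j=0}^{k-1}\frac{3^{j+1}+3}{3^{j+1}+1}\;-\;2C\sum_{m=0}^{k-1}3^{m}\prod_{j=0}^{m-1}\frac{3^{j+1}+3}{3^{j+1}+1}.
$$
By Lemma~\ref{limit} the right-hand side tends to $+\infty$ whenever $\Delta>0$, while each $\Delta_k$ is bounded above by the (finite) oscillation of $f$ on a fixed compact set. This contradiction forces $\Delta=0$, so $f|_L$ is convex. The hardest part will be carrying out the one-step amplification: one must select, at each scale, a concrete triple of points on an (approximately) vertical line inside the $k$-th scaled Koch piece and use local convexity of $f$ on the triangular gap faces adjacent to the Koch bumps to convert a deficit at scale $k-1$ into one at scale $k$ with precisely the stated amplification factor, analogously in spirit to Lemma~\ref{sverak} but adapted to the Koch geometry.
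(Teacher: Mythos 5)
Your overall strategy --- localise a chord deficit at a point of the curve on the line, amplify it scale by scale using convexity of $f$ on segments lying in the complement, and contradict Lemma~\ref{limit} --- is indeed the strategy of the paper. But the proposal defers exactly the step that constitutes the proof. The paper's argument is not a soft ``the deficit persists under rescaling'' argument: after reducing (via density of lines and self-similarity) to disproving the single midpoint inequality $f(\frac32,0)+f(\frac32,\frac{6}{\sqrt3})<2f(\frac32,\frac{3}{\sqrt3})$ for the standard Koch piece, it names explicit points $a_i,b_i,u_i,s_i,p_i$ at scale $3^{-i}$ near the apex, observes that the segments $[s_i,b_i]$ and $[a_i,u_i]$ lie in the complement of the curve, and extrapolates the convex restriction of $f$ along each of them to obtain the two inequalities $f(b_i)\ge\frac{3^{i+1}+3}{3^{i+1}+1}f(u_{i+1})$ and $f(u_i)\ge 3f(b_i)-2$. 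Your sketch contains no candidate for these points or segments, and you yourself flag their construction as ``the main obstacle''; without it there is no proof.

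Moreover, the quantitative bookkeeping you propose would not close the argument even if the geometry were supplied. Your one-step recurrence $\Delta_k\ge\frac{3^k+3}{3^k+1}\Delta_{k-1}-2\cdot3^{k-1}C$ does not iterate to the displayed formula (the per-step gain in the paper is roughly $3\cdot\frac{3^{i+1}+3}{3^{i+1}+1}$ with a subtracted \emph{constant} $2$, not a factor tending to $1$ with a subtracted term growing like $3^{k-1}$). More seriously, the claim that ``the right-hand side tends to $+\infty$ whenever $\Delta>0$'' is false: since $\prod_{j=0}^{k-1}\frac{3^{j+1}+3}{3^{j+1}+1}=\frac{2\cdot3^k}{3^k+1}$, the expression $\Delta\cdot3^k\prod(\cdots)-2C\sum_m 3^m\prod(\cdots)$ behaves like $(2\Delta-2C)\,3^k$ and tends to $-\infty$ whenever $\Delta<C$. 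Lemma~\ref{limit} diverges only because the leading $3^k$ terms cancel exactly for the specific coefficients $1$ and $2$, leaving a lower-order term of size about $2k$; this is precisely why the paper first normalises $f$ by an affine function and a scaling so that $f(a_0)=1$, $f(p_i)\ge1$ and $f(z),f(s_i)\le0$, which forces the relevant quantity at every scale to be at least $1$ and the subtracted constant to be exactly $2$. With a generic initial deficit $\Delta$ and a generic oscillation bound $C$ the contradiction evaporates, so both the geometric amplification step and the normalisation need to be supplied before the appeal to Lemma~\ref{limit} is legitimate.
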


\begin{proof}
First note that it is sufficient to prove the statement of the lemma for a dense set of lines parallel to the $y$-axis.
Due to the self similarity of the Koch curve it is then sufficient to prove that there is no continuous function on $[0,3]\times[0,\frac{6}{\sqrt{3}}]$
such that $f(\frac{3}{2},0)+f(\frac{3}{2},\frac{6}{\sqrt{3}})<2f(\frac{3}{2},\frac{3}{\sqrt{3}}).$
Here we consider the realisation of the Koch curve with endpoints $(0,0)$ and $(3,0)$.
For simplicity we will work in coordinates where the point $(\frac{3}{2},\frac{3}{\sqrt{3}})$ is translated to the origin.
For $i\in\en_0$ denote
$$
a_{i}=\left(0,-\frac{\sqrt{3}}{3^{i+1}}\right),\quad b_{i}=\left(\frac{1}{2\cdot 3^i},-\frac{\sqrt{3}}{3^{i+1}}\right),
\quad u_{i}=\left(\frac{1}{2\cdot 3^i},-\frac{\sqrt{3}}{3^{i+1}}\right), 
$$
$$
z=\left(0,\frac{\sqrt{3}}{3}\right),\quad 
s_{i}=\left(\frac{1}{2\cdot 3^i},\frac{\sqrt{3}}{3}\right)\quad \text{and}\quad
p_{i}=\left(\frac{1}{2\cdot 3^i},0\right).
$$
Modifying $f$ by adding an appropriate affine function and multiplying it by an appropriate constant we can suppose that
$f(z)\leq0$, $f(s_i)\leq0$, $f(a_0)=1$ and $f(p_i)\geq f(0,0)=1.$
Since $f$ is convex on $[s_i,b_i]$ for every $i$ we can write
\begin{equation}\label{one}
\begin{aligned}
f(b_i)\geq& f(u_{i+1})+\frac{|b_i-u_{i+1}|}{|s_i-u_{i+1}|}(f(u_{i+1})-f(s_{i}))\geq
\left(1+\frac{\frac{\sqrt{3}}{3^{i+1}}-\frac{\sqrt{3}}{3^{i+2}}}{\frac{\sqrt{3}}{3}+\frac{\sqrt{3}}{3^{i+2}}}\right)f(u_{i+1})\\
=&\left(1+\frac{2}{3^{i+1}+1}\right)f(u_{i+1})=\frac{3^{i+1}+3}{3^{i+1}+1}f(u_{i+1}).
\end{aligned}
\end{equation}
Moreover, since $f$ is convex on $[a_i,u_i]$ for every $i$ we can write
\begin{equation}\label{two}
\begin{aligned}
f(u_i)\geq& f(b_{i})+\frac{|b_i-u_{i}|}{|b_i-a_{i}|}(f(b_{i})-f(a_{i}))\\
\geq&f(b_i)+\left(\frac{\frac{1}{2\cdot 3^i}-\frac{1}{2\cdot 3^{i+1}}}{\frac{1}{2\cdot 3^i}}\right)(f(b_{i})-1)=3f(b_i)-2.
\end{aligned}
\end{equation}
Combining \eqref{one} and \eqref{two} we then obtain
$$
f(b_i)\geq \frac{3^{i+1}+3}{3^{i+1}+1}(3f(b_i)-2)\,,
$$
and iterating for every $i<k$,
$$
f(b_i)\geq 3^k\prod_{j=0}^{k-1}\frac{3^{j+1}+3}{3^{j+1}+1}f(b_{i+k})
-2\sum\limits_{m=0}^{k-1}3^m\prod\limits_{j=0}^{m-1}\frac{3^{j+1}+3}{3^{j+1}+1}\,.
$$
Since $f$ is convex on $[s_i,b_i]$ we have for every $i$,
$$
f(b_i)\geq f(p_{i+1})+\frac{|s_i-p_{i+1}|}{|b_i-p_{i+1}|}(f(p_{i+1})-f(s_{i}))\geq f(p_{i+1})\geq 1.
$$
Finally, for $i=0$  and any $k>0$ we obtain
$$
f(b_0)\geq 3^k\prod\limits_{j=0}^{k-1}\frac{3^{j+1}+3}{3^{j+1}+1}
-2\sum\limits_{m=0}^{k-1}3^m\prod\limits_{j=0}^{m-1}\frac{3^{j+1}+3}{3^{j+1}+1}
$$
which is not possible due to Lemma~\ref{limit}
\end{proof}

\section{Open problems}

\noindent The following general question is likely to be very difficult to answer, but naturally arises from the introducion of the notion of $c$-removability.

\begin{problem} \label{ProbUltimatni}
Is there any interesting characterization of $c$-removable sets? 
\end{problem}

However, there are several other interesting problems whose solutions might contribute to our understanding of the matter.

\begin{problem}\label{NullProblem}
Is there a closed totally disconnected Lebesgue null set in $\er^2$ which is not $c$-removable?
\end{problem}

\begin{problem}
Is the Cantor dust $c$-removable?
\end{problem}

\begin{problem}\label{KochProblem}
Is there a non-trivial $c$-removable continuum in $\er^2$? 
\end{problem}

Note that if one could prove that there is a dense set of lines intersecting the Koch curve in countably many points, the answer would be positive; this would follow from the proof of Theorem~\ref{theorem1}.

\end{document}